\title{Discrepancy Properties and Conjugacy Classes of Interval Exchange Transformations}
\author{Christian Wei\ss}
\date{\today}
\newcommand{\subjclass}[2][2020]{%
	\let\@oldtitle\@title%
	\gdef\@title{\@oldtitle\footnotetext{#1 \emph{Mathematics Subject Classification.} #2}}%
}
\newcommand{\keywords}[1]{%
	\let\@@oldtitle\@title%
	\gdef\@title{\@@oldtitle\footnotetext{\emph{Keywords.} #1.}}%
}
\subjclass{37E05, 11J71, 11K38, 11B50}	
\keywords{Interval exchange transfomrations; Conjugacy classes; Low-discpreancy sequences; Fundamental discontinuities}
\newtheorem{thm}{Theorem}[section]
\newtheorem{defi}[thm]{Definition}
\newtheorem{lem}[thm]{Lemma}
\newtheorem{prop}[thm]{Proposition}
\newtheorem{cor}[thm]{Corollary}
\newtheorem{exa}[thm]{Example}
\providecommand{\customgenericname}{}
\newcommand{\newcustomtheorem}[2]{%
	\newenvironment{#1}[1]
	{%
		\renewcommand\customgenericname{#2}%
		\renewcommand\theinnercustomgeneric{##1}%
		\innercustomgeneric
	}
	{\endinnercustomgeneric}
}
\newcommand{\RR}{{\mathbb{R}}}
\newcommand{\NN}{{\mathbb{N}}}
\newcommand{\QQ}{{\mathbb{Q}}}
\begin{document} 

\maketitle

\begin{abstract} Interval exchange transformations are typically uniquely ergodic maps and therefore have uniformly distributed orbits. Their degree of uniformity can be measured in terms of the star-discrepancy. Few examples of interval exchange transformations with low-discrepancy orbits are known so far and only for $n=2,3$ intervals, there are criteria to completely characterize those interval exchange transformations. In this paper, it is shown that having low-discrepancy orbits is a conjugacy class invariant under composition of maps. To a certain extent, this approach allows us to distinguish interval exchange transformations with low-discrepancy orbits from those without. For $n=4$ intervals, the classification is almost complete with the only exceptional case having monodromy invariant $\rho = (4,3,2,1)$. This particular monodromy invariant is discussed in detail.
\end{abstract}


\section{Introduction} Interval exchange transformations (IETs) are an important kind of generalization of circle rotations and are therefore a widely considered class of discrete dynamical systems, see e.g. \cite{HK02}. They act on the unit interval by cutting it into $n \in \mathbb{N}$ subintervals and permuting these subintervals. Hence, they are piecewise linear functions. Besides their rich dynamics, their special importance lies amongst others in their connection to translation flows, see e.g. \cite{Via06}, and moduli spaces, see e.g. \cite{Yoc06}. In the simplest case of $n=2$ intervals, interval exchange transformations are circle rotations and their orbits are the famous (shifted) Kronecker sequences.\\[12pt]
A seminal result, which was independently proved in \cite{Mas82} and \cite{Vee78}, states that almost every interval exchange transformation is uniquely ergodic. Thus, they typically have uniformly distributed orbits. However, this does not mean that the orbits are necessarily low-discrepancy sequences, i.e. roughly speaking as uniformly distributed as possible, which only occurs on rare occasions. The present article is dedicated to the purpose to better understand when interval exchange transformations possess low-discrepancy orbits.\\[12pt] 
Recall that for a sequence $P=(x_i)_{i=1}^\infty$ in $[0,1)$, the \textbf{star-discrepancy} of its first $N$ points is defined by
$$D^*_N(P) := \sup_{B \subset [0,1)} \left| \frac{|\left\{ x_i | 1 \leq i \leq N \right\} \cap B|}{N} - \lambda(B) \right|,$$
where the supremum is taken over all intervals $B = [0,a) \subset [0,1)$ anchored at zero and $\lambda(\cdot)$ denotes the Lebesgue measure. If $D_N^*(P)$ satisfies
$$D_N^*(P) = O(N^{-1}(\log(N)))$$
then $P$ is called a \textbf{low-discrepancy sequence}. Indeed, this is the best possible order of convergence to zero by the work of Schmidt, see \cite{Sch72}.  The precise best possible value of the constant $c$ with $D_N^*(P) \leq c N^{-1} \log(N)$ for all $N \in \NN$ and a sequence $P$ is still unknown (see e.g. \cite{Lar14}). The concept of star-discrepancy is closely related to the construction of optimal integration rules via the Koksma-Hlawka inequality. For more details, in particular on higher dimensions, we refer the reader to \cite{DP10}.\\[12pt]
A complete classification of low-discrepancy orbits for IETs with $n=2$ intervals goes back to Behnke: the low-discrepancy property relies on the continued fraction expansion of the angle of rotation. These orbits are some of the classical examples of low-discrepancy sequences, i.e. (shifted) Kronecker sequences, see e.g. \cite{Nie92}. 
\begin{thm} \label{thm:kronecker} (\cite{DT97}, Corollary 1.65) \label{thm:n=2} Let $z \in \RR \setminus \QQ$ with continued fraction expansion $z =[a_0,a_1,a_2,\ldots]$. Then the sequence $(\left\{ n z \right\})_{n \geq 0}$, where $\left\{ \cdot \right\}$ denotes the fractional part, has low-discrepancy if and only if the Ces\`aro mean
	$$a_m^{(1)}(z) = \frac{1}{m} \sum_{j=1}^m a_j$$
is a bounded sequence.  	
\end{thm}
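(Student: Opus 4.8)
The plan is to establish a two-sided quantitative estimate linking $ND_N^*$ to the partial sums $\sum_{i\le m}a_i$ of the partial quotients, and then to read off the stated equivalence from standard growth properties of the continued fraction denominators $q_m$. Write $D_N^*$ for the star-discrepancy of the first $N$ terms of $(\{nz\})_{n\ge 0}$ and $p_m/q_m$ for the $m$-th convergent of $z=[a_0;a_1,a_2,\dots]$. Concretely I would prove \textbf{(U)}: if $q_m\le N<q_{m+1}$ then $ND_N^*\ll\sum_{i=1}^{m+1}a_i$; and \textbf{(L)}: for every $m$ there is an $N$ with $\log N\le\log q_m+O(1)$ and $ND_N^*\gg\sum_{i=1}^{m}a_i$; here and below the implied constants are absolute.

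The two tools are the three-distance theorem and the Ostrowski numeration of integers with respect to $z$. For \textbf{(U)} I would write $N=\sum_{k=0}^{m}c_{k+1}q_k$ with admissible digits ($0\le c_1<a_1$, $0\le c_{k+1}\le a_{k+1}$, and $c_k=0$ whenever $c_{k+1}=a_{k+1}$) and process the first $N$ points in consecutive blocks of lengths $q_0,q_1,\dots$; each such block is a rigid rotation of the first $q_k$ points, which by the three-distance theorem differ from an equispaced configuration by $O(1/q_k)$, so an induction over the $m+1$ levels — controlling at each step the at most three occurring gap lengths — yields $ND_N^*\ll\sum_k(c_{k+1}+1)\ll\sum_{i=1}^{m+1}a_i$. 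This is essentially bookkeeping. For \textbf{(L)} the idea is to exhibit a single $N$ that is ``unbalanced on all levels at once''. I would combine the inequality $ND_N^*\ge\bigl|\sum_{n=0}^{N-1}(\tfrac12-\{nz\})\bigr|$ with the Ostrowski summation formula, which expresses $S(N):=\sum_{n=0}^{N-1}(\tfrac12-\{nz\})$ as an alternating sum $\sum_{k\ge 0}(-1)^k s_k$ where $s_k\ge 0$ depends only on the digit $c_{k+1}$ and on local continued fraction data, with $s_k\ll a_{k+1}$ always and $s_k\gg a_{k+1}$ once $c_{k+1}$ is of size $\approx a_{k+1}/2$. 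Taking $c_{k+1}=\lfloor a_{k+1}/2\rfloor$ on the even levels $k\le m-1$ and $c_{k+1}=0$ on the odd ones is admissible and turns $S(N)$ into a sum of same-sign terms of total size $\asymp\sum_{k\le m-1,\ k\text{ even}}a_{k+1}$; the mirror choice on the odd levels captures the complementary sum, and since $N\le\tfrac12\sum_{k<m}a_{k+1}q_k=O(q_m)$ in both cases, one of the two choices produces the $N$ asserted in \textbf{(L)}.

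Given \textbf{(U)} and \textbf{(L)}, both directions of the equivalence are short. If $a_m^{(1)}(z)\le A$ for all $m$ then $\sum_{i\le m}a_i\le Am$, so for $q_m\le N<q_{m+1}$ estimate \textbf{(U)} together with $q_m\ge\varphi^{m-1}$ (here $\varphi=(1+\sqrt5)/2$) gives $ND_N^*\ll Am\ll\log N$, i.e.\ $D_N^*=O(N^{-1}\log N)$. Conversely, if $a_m^{(1)}(z)$ is unbounded, choose $m_j\to\infty$ with $A_j:=a_{m_j}^{(1)}(z)\to\infty$; by \textbf{(L)} there are $N_j$ with $N_jD_{N_j}^*\gg m_jA_j$, while $q_m\le\prod_{i\le m}(a_i+1)$ and concavity of the logarithm give $\log N_j\le\log q_{m_j}+O(1)\le m_j\log(1+A_j)+O(1)$, hence $N_jD_{N_j}^*/\log N_j\gg A_j/\log(1+A_j)\to\infty$ and the orbit is not low-discrepancy. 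The step I expect to be the main obstacle is \textbf{(L)}: pinning down the sign pattern and the lower bounds for the summands $s_k$ in the Ostrowski summation formula precisely enough to genuinely rule out cancellation; the remainder of the argument is routine or classical.
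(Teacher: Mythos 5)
The paper does not actually prove this statement --- it is quoted from Drmota--Tichy \cite{DT97} (Corollary 1.65) --- so there is no internal argument to compare against; what you have written is a reconstruction of the standard proof of this classical theorem of Behnke, and its overall architecture is correct. The upper estimate \textbf{(U)} via the Ostrowski expansion $N=\sum_k c_{k+1}q_k$ and the three-distance theorem, giving $ND_N^*\ll\sum_{k\le m}(c_{k+1}+1)\le \sum_{i\le m+1}a_i$ for $q_m\le N<q_{m+1}$, is exactly the route taken by the quantitative digit estimates that precede Corollary 1.65 in \cite{DT97}; and your deduction of both directions of the equivalence from \textbf{(U)} and \textbf{(L)} is sound --- in particular the converse direction is handled correctly, where the bound $\log q_m\le\sum_{i\le m}\log(a_i+1)\le m\log\bigl(1+a_m^{(1)}(z)\bigr)$ is precisely what makes the conclusion $A_j/\log(1+A_j)\to\infty$ sufficient (a cruder bound $\log N_j\ll m_j$ would be false when the $a_i$ are large, so this step is not cosmetic). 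You correctly identify \textbf{(L)} as the only genuinely delicate point. Two details there should be made explicit in a write-up, since the argument does not close without them: first, levels with $a_{k+1}=1$ contribute nothing under the choice $c_{k+1}=\lfloor a_{k+1}/2\rfloor$, so your lower bound is really $\gg\sum_{k}a_{k+1}-O(m)$ rather than $\gg\sum_k a_{k+1}$; second, the Ostrowski summation formula carries an $O(1)$ error per level, again $O(m)$ in total. Both losses are absorbed only because in the relevant case $\sum_{i\le m}a_i=m\,a_m^{(1)}(z)$ with $a_m^{(1)}(z)\to\infty$ along the chosen subsequence, so the $O(m)$ terms are of lower order; stating this explicitly turns the sketch into a proof.
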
  
While a theorem of Khintchine implies that the set $$\left\{ z \in [0,1) \  | \ z \notin \QQ, a_m^{(1)}(z) \ \textrm{is a bounded squence} \right\}$$ has Lebesgue measure zero, see \cite{Khi63}, it has Hausdorff dimension one due to the Jarnik-Besicovitch Theorem, see \cite{Jar31}, \cite{Bes34}. The same observation essentially holds true for $n=3$ intervals. In this case, an (admissible) interval exchange transformation is not a rotation if and only if the ordering of the intervals is inverted by the map. In other words, the so-called monodromy invariant, which describes the ordering of the subintervals after applying the interval exchange transformation, is $\rho = (3,2,1)$ then, compare with \cite{Via06}.\footnote{The general definition of the monodromy invariant is given in Section~\ref{sec:inequ}. Here we assume that the subintervals before rotation are labeled in increasing ordering.} 
\begin{thm} (\cite{Wei19}, Theorem 3.1) \label{thm:n=3} Let $\lambda_1, \lambda_2, \lambda_3$ be the lengths of three intervals with $\lambda_1 + \lambda_2 + \lambda_3 = 1$. Let $f: [0,1) \to [0,1)$ denote the interval exchange transformation, which inverts the ordering of the three intervals. Then $f$ yields a low-discrepancy sequence $(f^n(y))_{n \geq 0}$ for all $y \in [0, 1)$ if and only if $\frac{\lambda_2+\lambda_3}{1 + \lambda_2}$ is irrational and its continued fraction expansion has bounded Ces\`aro mean.  
\end{thm}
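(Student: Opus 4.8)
The plan is to reduce the statement to the already settled case $n=2$ (Theorem~\ref{thm:n=2}) by realising the flip $3$-IET $f$ as the first-return map of a single circle rotation. Put $L := 1+\lambda_2$ and $\gamma := \lambda_2+\lambda_3$ and let $R_\gamma$ be the rotation $x \mapsto x+\gamma \bmod L$ on the circle $[0,L)$. Examining the three branches of $f$ shows that $f$ is exactly the first-return map of $R_\gamma$ to the subinterval $[0,1) \subset [0,L)$, the return time being $1$ on $[0,\lambda_1)$ and on $[\lambda_1+\lambda_2,1)$ and $2$ on the middle branch $[\lambda_1,\lambda_1+\lambda_2)$; rescaling the circle to unit length identifies $f$ with the first-return map of the rotation by $\alpha = \gamma/L = \tfrac{\lambda_2+\lambda_3}{1+\lambda_2}$ on $[0,1)$ to the subinterval $[0,L^{-1})$. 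Equivalently, writing $S_n(y) := \sum_{k=0}^{n-1} r(f^k(y))$ for the Birkhoff sum of the return time $r \in \{1,2\}$, one has $f^n(y) = R_\gamma^{S_n(y)}(y)$ for every $y \in [0,1)$ and every $n \geq 0$, so each $f$-orbit is the trace on $[0,1)$ of an $R_\gamma$-orbit.

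Next I would record two quantitative facts about $S_n$. If $\alpha \in \QQ$ then $R_\gamma$, hence $f$, is periodic with no dense orbit and $D_N^*$ does not tend to $0$; so in both directions of the equivalence we may assume $\alpha \notin \QQ$, whence $R_\gamma$ and $f$ are minimal and uniquely ergodic. Since $r$ is a step function with $\int_0^1 r = \lambda_1+2\lambda_2+\lambda_3 = L$, we get $S_n(y)/n \to L$ uniformly in $y$; more precisely $S_n(y)-nL$ equals, up to sign, the counting error of the $f$-orbit on the single interval $[\lambda_1,\lambda_1+\lambda_2)$, which gives $|S_n(y)-nL| = O(n\,D_n^*(\text{$f$-orbit of }y))$ and, dually, $|S_n(y)-nL| = O(S_n(y)\,D_{S_n(y)}^*(\text{$R_\gamma$-orbit of }y))$. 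Whichever side is assumed to have low discrepancy, one of these bounds yields the precise comparison $M = NL + O(\log M)$ between the length $M$ of an $R_\gamma$-orbit segment and the number $N$ of its points that fall in $[0,1)$.

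The heart of the proof is transferring the discrepancy across the induction in both directions. For a test interval $[0,a)$ with $a \le 1$, the number of $R_\gamma$-iterates in $[0,a)$ among the first $M$ equals, up to an additive $O(1)$, the number of $f$-iterates in $[0,a)$ among the first $N$, since the surplus $R_\gamma$-points all lie in the overhang $[1,L)$. For a test interval $[1,a) \subset [1,L)$ one uses that such an overhang point is produced at time $m$ exactly when $R_\gamma^{m-1}(y)$ lies in the middle branch $[\lambda_1,\lambda_1+\lambda_2)$ and that $x \mapsto x+\gamma$ carries the relevant sub-interval of the middle branch bijectively onto $[1,a)$, so this count is again controlled by an $f$-orbit count on a sub-interval of $[0,1)$. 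Inserting $M = NL + O(\log M)$, one concludes that the $R_\gamma$-orbits satisfy $D_M^* = O(M^{-1}\log M)$ if and only if the $f$-orbits satisfy $D_N^* = O(N^{-1}\log N)$. Rescaling by $L$ and applying Theorem~\ref{thm:n=2} then translates ``$R_\alpha$ has low-discrepancy orbits'' into ``$\alpha = \tfrac{\lambda_2+\lambda_3}{1+\lambda_2}$ is irrational with bounded Ces\`aro mean'', and the classical fact that shifting the starting point changes the discrepancy of a Kronecker sequence by at most an additive constant disposes of the quantifier ``for all $y$''.

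I expect the main obstacle to be the bookkeeping in the previous paragraph: the error terms must be held to $O(\log N)$ rather than merely $o(N)$, which requires feeding the quantitative return-time estimate back into the counting, treating the test intervals inside $[0,1)$ and inside the overhang $[1,L)$ as separate cases, and absorbing the off-by-one coming from an incomplete last return block. It is precisely this normalisation --- orbit length $N$ for $f$ against orbit length $M \approx NL$ for a rotation on a circle of length $L = 1+\lambda_2 \neq 1$ --- that makes the parameter $\tfrac{\lambda_2+\lambda_3}{1+\lambda_2}$ appear, and it has to be handled carefully; pushing the Rauzy--Veech continued fraction algorithm for IETs would meet the same arithmetic but is, in this three-interval case, less transparent than the reduction to an honest rotation.
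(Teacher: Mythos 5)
This theorem is imported verbatim from \cite{Wei19} (Theorem 3.1) and the present paper gives no proof of it, so there is nothing internal to compare against; judged on its own, your reduction is correct and is essentially the standard (and, as far as I can tell, the original) route: the $(3,2,1)$-IET is exactly the first-return map to $[0,1)$ of the rotation by $\gamma=\lambda_2+\lambda_3$ on a circle of circumference $L=1+\lambda_2$ (your branch-by-branch check and return times $1,2,1$ are right, and this is precisely where the parameter $\tfrac{\lambda_2+\lambda_3}{1+\lambda_2}$ comes from), after which the two-sided discrepancy transfer with $M=NL+O(\log N)$ and the overhang interval $[1,a)$ pulled back into the middle branch reduces everything to Theorem~\ref{thm:n=2}. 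The write-up is a sketch rather than a full proof, but the outline is sound and the remaining work is only the bookkeeping you already identify.
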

Recall that for a map $f: [0,1) \to [0,1)$, its iteration is inductively defined by $f^1:=f$ and $f^{i+1} := f(f^i)$. Together, Theorem~\ref{thm:n=2} and Theorem~\ref{thm:n=3} completely characterize all interval exchange transformation with $n \in \left\{ 2,3 \right\}$ intervals that posses low-discrepancy orbits. In the case of $n \geq 4$ intervals, there is much less known. It is a consequence of \cite{Zor97} that given a monodromy invariant Lebesgue-almost all interval exchange do not have low-discrepancy orbits for $n \geq 4$ as well. In contrast, an abstract criterion for identifying interval exchange transformations with low-discrepancy orbits was derived in \cite{Dou99}. It involves the constructive geometric definition of systems of rank one, see also \cite{Fer97}, and can therefore hardly be applied in practice to check if a given IET has a low-discrepancy orbit. To the best of our knowledge, the only explicit examples of interval exchange transformations with $n \geq 4$ intervals having low-discrepancy orbits have been found in \cite{Wei19}.\\[12pt] 
In this paper, we aim to contribute to a better understanding of the situation. The idea is to consider conjugacy classes of the group of all interval exchange transformations, where the group action is given by functional composition. The following theorem is a key observation for this novel approach, which allows applications on the distributional properties of IET orbits. Its proof will be given in Section~\ref{sec:inequ}.
\begin{thm} \label{thm:main_thm} Let $P=(x_i)_{i=1}^\infty$ be a sequence of points in $[0,1)$. Furthermore, let $f$ be an (arbitrary) interval exchange transformation with $n$ intervals and denote the sequence $(f(x_i))_{i=1}^\infty$ by $P^*$. For every $N \in \mathbb{N}$ then
$$\frac{1}{n} D_N^*(P)  \leq D_N^*(P^*) \leq n D_N^*(P)$$
holds.
\end{thm}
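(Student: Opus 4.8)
The plan is to exploit the fact that an interval exchange transformation $f$ with $n$ intervals is a piecewise translation: it subdivides $[0,1)$ into $n$ half-open subintervals $I_1,\dots,I_n$ and translates each $I_j$ by a constant $t_j$, so that $f$ restricted to each $I_j$ is an isometry onto its image $J_j = f(I_j)$, and the $J_j$ again partition $[0,1)$. The key point is that for any half-open anchored interval $B = [0,a)$, the preimage $f^{-1}(B)$ is a union of at most $n$ half-open subintervals (at most one inside each $I_j$), and $\lambda(f^{-1}(B)) = \lambda(B)$ since $f$ is measure-preserving. Dually, every anchored $B=[0,a)$ can be written, via the partition $\{J_j\}$, as a disjoint union of at most $n$ intervals whose $f$-preimages are again intervals. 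This is the structural fact I would isolate first.

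Next I would translate the counting quantity. Writing $A_N(B;Q) := |\{x_i : 1\le i\le N\}\cap B|$ for a sequence $Q$, the identity $f(x_i)\in B \iff x_i\in f^{-1}(B)$ gives $A_N(B;P^*) = A_N(f^{-1}(B);P)$. Hence
\[
\left|\frac{A_N(B;P^*)}{N} - \lambda(B)\right| = \left|\frac{A_N(f^{-1}(B);P)}{N} - \lambda(f^{-1}(B))\right|,
\]
using measure-preservation in the second term. Now $f^{-1}(B)$ is a disjoint union $\bigcup_{k=1}^{m} [c_k,d_k)$ with $m\le n$, and each summand contributes a discrepancy-type error: writing $[c_k,d_k) = [0,d_k)\setminus[0,c_k)$ and using additivity of both the counting function and Lebesgue measure over disjoint unions, together with the triangle inequality, each term $\big|\tfrac{1}{N}A_N([c_k,d_k);P) - \lambda([c_k,d_k))\big|$ is bounded by $2 D_N^*(P)$, but a cleaner route is to note that $f^{-1}(B)$ is a union of at most $n$ anchored-difference intervals so that summing over the $m\le n$ pieces yields $|\tfrac{1}{N}A_N(f^{-1}(B);P) - \lambda(f^{-1}(B))| \le n\, D_N^*(P)$; more carefully, one observes each of the at most $n$ subintervals of $f^{-1}(B)$ lies in a distinct $I_j$ and the right endpoints can be handled so that the bound $n D_N^*(P)$ is obtained. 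Taking the supremum over all $B$ gives $D_N^*(P^*) \le n\, D_N^*(P)$.

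For the reverse inequality $D_N^*(P) \le n\, D_N^*(P^*)$, I would apply the inequality just proved to the interval exchange transformation $f^{-1}$ (which is again an IET with $n$ intervals) and the sequence $P^*$, noting that $f^{-1}$ maps $P^*$ back to $P$; this gives $D_N^*(P) = D_N^*((f^{-1})(P^*)) \le n\, D_N^*(P^*)$, which rearranges to $\tfrac{1}{n} D_N^*(P) \le D_N^*(P^*)$. Thus both inequalities follow from a single lemma applied to $f$ and to $f^{-1}$.

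The main obstacle is the careful bookkeeping at the endpoints: when one writes $f^{-1}(B)$ as a union of half-open intervals $[c_k,d_k)$ and then each as $[0,d_k)\setminus[0,c_k)$, one is using $2m$ anchored intervals, which would only give the weaker constant $2n$. To obtain the sharp constant $n$ one must argue more economically — for instance, by noting that the left endpoints $c_k$ of the pieces of $f^{-1}(B)$ are always left endpoints of the original partition intervals $I_j$ (so the counting discrepancy on $[0,c_k)$ is controlled jointly rather than piece-by-piece), or by directly estimating $A_N(f^{-1}(B);P) - N\lambda(f^{-1}(B))$ as a telescoping sum over the partition. Getting this constant right, rather than settling for $2n$, is the only genuinely delicate point; everything else is the structural observation that $f$ is a measure-preserving piecewise isometry plus the triangle inequality.
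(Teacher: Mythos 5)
Your overall strategy is the same as the paper's: pull back an anchored interval $B=[0,a)$ under $f$, use $A_N(B;P^*)=A_N(f^{-1}(B);P)$ and measure preservation, exploit that $f^{-1}(B)$ is a union of at most $n$ subintervals subordinate to the partition $\{I_j\}$, and obtain the lower bound by applying the upper bound to $f^{-1}$ (which is again an $n$-IET). That reduction to a single inequality is exactly what the paper does.

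However, there is a genuine gap, and it is precisely the one you flag yourself: as written, your estimate only yields $D_N^*(P^*)\le 2n\,D_N^*(P)$, since each of the $m\le n$ pieces $[c_k,d_k)$ is bounded as a difference of two anchored intervals, costing $2D_N^*(P)$ apiece. This does not prove the stated theorem, whose constant $n$ the paper moreover shows to be attained (for $n=2$ and $n=2N$), so the sharper constant is not a cosmetic matter. Your two suggested repairs are gestures in the right direction but are not carried out, and ``the right endpoints can be handled so that the bound $nD_N^*(P)$ is obtained'' is an assertion, not an argument. The paper closes exactly this gap as follows: if $b$ lies in the $k$-th image interval, then $f^{-1}([0,b))$ consists of $k-1$ full partition intervals (indexed by a set $J$) plus one partial piece of $I_{\pi^{-1}(k)}$; writing $\sum_J=\sum_{i<\pi^{-1}(k)}+\sum_{i>\pi^{-1}(k)}-\sum_{i\neq\pi^{-1}(k),\,i\notin J}$, the first two sums are local discrepancies of $P$ at single points (cost $2D_N^*(P)$ total), and the complementary sum over $i\notin J$ is bounded by $(k-1)D_N^*(P)$ via an induction that peels off, at each step, either an initial anchored block $\{1,\dots,r\}$ or the single gap interval below $j_1$, each peel costing one $D_N^*(P)$. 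This gives $(k+1)D_N^*(P)\le n\,D_N^*(P)$ since $k\le n-1$ (the case $k=n$ giving $2D_N^*(P)$ directly). To complete your proof you would need to supply an argument of this kind; without it the constant $n$ is not established.
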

Note that the second inequality follows from the first by considering $f^{-1}$ instead of $f$. By applying the well-known formula (see e.g. \cite{Nie92}, Theorem 2.6)
$$D_N^*(P) = \frac{1}{2N} + \max_{i=1,2,\ldots,N} \left| x_i - \frac{2i-1}{N} \right|$$
for the star-discrepancy of an ordered set $P=\left\{x_1,x_2,\ldots,x_N\right\} \subset [0,1)$ with $x_1 \leq x_2 \leq \ldots \leq x_N$, the boundary in Theorem~\ref{thm:main_thm} can be seen to be sharp in the following cases.  
\begin{itemize}
		\item For $n=1$, the interval exchange transformation is the identity and the claim follows trivially.
		\item For $n=2$ and $N$ arbitrary, choose 
		$$x_i = \frac{1}{N} + (i-1) \frac{N-2}{N(N-1)}$$ 
		for $i=1,\ldots,N$. Then $D^*_N(x_1,\ldots,x_N) = 1/N$. Now let $f$ exchange the two intervals $[0,1/N)$ and $[1/N,1)$. Then $D^*_N(f(x_1),\ldots,f(x_N)) = 2/N$.
		\item For $n=2N$, we choose the starting set $P$ as $x_i = (2i-1)/2N$ for $i=1,\ldots N$ . This set has star-discrepancy $1/(2N)$ which is the lowest possible value that can be achieved. Now let $1/N > \varepsilon > 0$ be arbitrary. For $i = 1, \ldots, 2N$ we define the following intervals 
		$$I_i = \begin{cases} [0, x_1 - \varepsilon/2) & i = 1\\ 
		[x_n - \varepsilon/2, x_n + \varepsilon/2) & i = 2n, n=1, \ldots, N\\
		[x_{n} + \varepsilon/2, x_{n+1} - \varepsilon/2) & i = 2n+1, n=1, \ldots, N-1\\
		[x_{N} - \varepsilon/2, 1) & i = 2N
		\end{cases}$$ 
		and choose the monodromy invariant $\rho$ as
		$$\rho = \begin{cases} 2i & i \leq N \\ i - N + (i-N-1)\cdot 2& N < i \leq 2N.
		\end{cases}$$
		Then the star-discrepancy converges $D_N^*(f(x_1),\ldots,f(x_N)) \to 1$ as $\epsilon \to 0$. 
\end{itemize}
On the other hand, it is trivial that the given boundary cannot be sharp for interval exchange transformations with $n>2N$ intervals because $\frac{1}{2N} \leq D_N^*(x_1,\ldots,x_N) \leq 1$. By applying Theorem~\ref{thm:main_thm} to an interval exchange transformation $g$ and realizing that $(gfg^{-1})^i(x) = gf^i(g^{-1}(x))$, the following corollary is an immediate consequence of Theorem~\ref{thm:main_thm}.
\begin{cor} \label{cor:ld_conjugacy} Let $f,g$ be two interval exchange transformations and let $x \in [0,1)$ be arbitrary. Then $(f^i(g^{-1}(x)))_{i=1}^\infty$ is a low-discrepancy sequence if and only if $(\left(gfg^{-1}\right)^i(x))_{i=1}^\infty$ is a low-discrepancy sequence. 
\end{cor}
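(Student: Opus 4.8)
The plan is to deduce Corollary~\ref{cor:ld_conjugacy} directly from Theorem~\ref{thm:main_thm} together with the elementary observation about iterating a conjugate map. First I would fix the two interval exchange transformations $f, g$ and an arbitrary $x \in [0,1)$, and set $y := g^{-1}(x)$, so that the sequence under consideration is $P := (f^i(y))_{i=1}^\infty$. The first key step is the purely algebraic identity $(gfg^{-1})^i = g f^i g^{-1}$, which is noted just before the statement; applying this at the point $x$ gives $(gfg^{-1})^i(x) = g(f^i(g^{-1}(x))) = g(f^i(y))$. Hence the orbit of $x$ under the conjugate map is exactly $P^* := (g(x_i))_{i=1}^\infty$ where $P = (x_i)_{i=1}^\infty = (f^i(y))_{i=1}^\infty$. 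Note one only needs that $g$ (and $g^{-1}$) are interval exchange transformations here; $f$ being an IET is not used in this reduction, only in guaranteeing that $f^i(y)$ is well-defined.

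The second step is to apply Theorem~\ref{thm:main_thm} to the sequence $P$ and the interval exchange transformation $g$, say with $m$ intervals. The theorem yields, for every $N \in \NN$,
$$\frac{1}{m} D_N^*(P) \leq D_N^*(P^*) \leq m\, D_N^*(P).$$
From the left inequality, if $D_N^*(P^*) = O(N^{-1}\log N)$ then $D_N^*(P) = O(N^{-1}\log N)$; from the right inequality, the converse implication holds, since $m$ is a fixed constant independent of $N$ and absorbing a constant factor does not affect membership in the class $O(N^{-1}\log N)$. Therefore $P$ is a low-discrepancy sequence if and only if $P^*$ is. Unwinding the definitions of $P$ and $P^*$, this says precisely that $(f^i(g^{-1}(x)))_{i=1}^\infty$ is a low-discrepancy sequence if and only if $((gfg^{-1})^i(x))_{i=1}^\infty$ is, which is the claim.

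This argument is essentially immediate once Theorem~\ref{thm:main_thm} is in hand, so there is no genuine obstacle; the only points requiring a modicum of care are bookkeeping ones. One should make sure the conjugation identity is stated for the correct direction of $g$ versus $g^{-1}$ and that $g$ is applied to the whole orbit rather than shifting indices, and one should remark that both inequalities in Theorem~\ref{thm:main_thm} are needed — the lower bound for one direction of the ``if and only if'' and the upper bound for the other — which is why it is convenient that the theorem is stated as a two-sided estimate (equivalently, one could apply the one-sided version to $g$ and then to $g^{-1}$). Finally, it is worth noting explicitly that the constant $m$ depends only on $g$ and not on $N$ or on the point $x$, so the low-discrepancy property, being an asymptotic statement about the order of growth of $D_N^*$, is genuinely preserved.
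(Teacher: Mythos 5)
Your proof is correct and follows exactly the route the paper intends: it identifies $((gfg^{-1})^i(x))_{i=1}^\infty$ with the image under $g$ of $(f^i(g^{-1}(x)))_{i=1}^\infty$ via $(gfg^{-1})^i = gf^ig^{-1}$ and then applies the two-sided inequality of Theorem~\ref{thm:main_thm} to $g$, which is precisely how the paper derives the corollary as an immediate consequence. No issues.
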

This result is particularly helpful in the case of $n=4$ intervals, where we can show that an (admissible) interval exchange transformation with $n=4$ intervals and monodromy invariant $\rho \neq (4,3,2,1)$ is always conjugate to an interval exchange transformation with at most $3$ intervals (Theorem~\ref{thm:old_forms_n4}). We call these interval exchange transformations \textbf{old transformations}. Theorems~\ref{thm:kronecker} and Theorem~\ref{thm:n=3} can then be used to determine if an old transformation has a low-discrepancy orbit or not. This constitutes a significant step towards a complete and practically applicable (in contrast to the systems of rank one viewpoint) classification of IETs that have low-discrepancy orbits. Moreover, it limits the number of monodromy invariants to consider when searching for potential new low-discrepancy sequences generated by IETs.\\[12pt] 
To complete the picture for $n=4$ intervals, Proposition~\ref{prop:newtrans} gives a sufficient condition under which an interval exchange transformation with $\rho = (4,3,2,1)$ cannot be achieved by conjugation of an interval exchange transformation with a lower number of intervals. Such an interval exchange transformation is called a \textbf{new transformation}. An essential tool in this context is the work of Bernazzani in \cite{Ber18}.\\[12pt]
The conjugation method proposed in the present paper can also be applied in the case of $n > 4$ intervals. However, it is farer from yielding a complete classification then. For instance $50$ of $71$ (admissible) monodromy invariants in the case of $n = 5$ intervals can this way be excluded to give new transformations. Furthermore, it is shown in Example~\ref{exa:strongly_separating} that there are old transformations with $n=5$ intervals having one of the $21$ monodromy invariants we could not directly exclude.  

\section{Discrepancy Properties of Conjugacy Classes} \label{sec:inequ}


\paragraph{Interval Exchange Transformations.} 
Let  $\left\{ I_\alpha | \alpha \in \mathcal{A} \right\}$ be a finite partition of the unit interval $[0,1)$ into sub-intervals indexed by the finite alphabet $\mathcal{A}=\left\{1,\ldots,n\right\}$. An \textbf{interval exchange transformation} is a map $f: [0,1) \to [0,1)$ which is a translation on each subinterval $I_\alpha$.  It is determined by its combinatorial data and its length data. The \textbf{combinatorial data} consists of two bijections $\pi_0, \pi_1: \mathcal{A} \to \mathcal{A}$, and the \textbf{length data} are numbers $(\lambda_\alpha)_{\alpha \in \mathcal{A}}$ with $\lambda_\alpha > 0$ and $1 = \sum_{\alpha \in \mathcal{A}} \lambda_\alpha$. The number $\lambda_\alpha$ is the length of the subinterval $I_\alpha$ and the pair $\pi = (\pi_0,\pi_1)$ describes the ordering of the subintervals before and after the map $f$ is iterated (compare Figure~1). 
\begin{center}
	\begin{tikzpicture}[scale=1.0]
	\draw[thick] (0,0)--(8,0);     	
	\draw[thick] (0,0)--(0,0.25);     	
	\draw (0.75,0)--(0.75,0) node[above] {$1$};
	\draw[thick] (1.5,0)--(1.5,0.25);
	\draw (2.875,0)--(2.875,0) node[above] {$2$};
	\draw[thick] (4.25,0)--(4.25,0.25);  	
	\draw (5.125,0)--(5.125,0) node[above] {$3$};    	
	\draw[thick] (6,0)--(6,0.25);
	\draw (7,0)--(7,0) node[above] {$4$};  
	\draw[thick] (8,0)--(8,0.25);
	
	\draw[thick] (0,-0.25)--(8,-0.25);    	
	\draw[thick] (0,-0.25)--(0,-0.5);  
	\draw (1,-0.25)--(1,-0.25) node[below] {$4$}; 
	\draw[thick] (2,-0.25)--(2,-0.5);		
	\draw (2.875,-0.25)--(2.875,-0.25) node[below] {$3$};    	
	\draw[thick] (3.75,-0.25)--(3.75,-0.5);  	
	\draw (5.125,-0.25)--(5.125,-0.25) node[below] {$2$}; 
	\draw[thick] (6.5,-0.25)--(6.5,-0.5);
	\draw (7.25,-0.25)--(7.25,-0.25) node[below] {$1$}; 
	\draw[thick] (8,-0.25)--(8,-0.5); 
	
	\end{tikzpicture}\\    	
	Figure 1. Interval exchange transformation for $n=4$ \label{fig:1}
\end{center}

Whenever it is necessary to stress the number of subintervals involved, the map $f$ is called an $n$-interval exchange transformation or shorthand an $n$-IET. The combinatorial data is \textit{not} uniquely determined by $f$ (see e.g. \cite{Via06}, Example~1.3). In contrast, the expression $\rho = \pi_1 \circ \pi_0^{-1}$ is unique and called the \textbf{monodromy invariant} of $f$. When we normalize $\pi_0 = \textrm{Id}$, then $\pi_1$ coincides with the monodromy invariant. If the combinatorial data satisfies 
\begin{align} \label{eq4}
\pi_0^{-1}(\left\{1, \ldots, k \right\}) = \pi_1^{-1}(\left\{1, \ldots, k \right\})
\end{align} 
for some $k < n$, the interval exchange transformation splits into two interval exchange transformations of simpler combinatorics. The analysis of interval exchange transformations is therefore usually restricted to \textbf{admissible} combinatorial data, for which \eqref{eq4} does not hold for any $k < n$. Moreover, an interval exchange transformation $f$ satisfies the \textbf{Keane condition} if the orbits of the end points of the subintervals are infinite and as disjoint as possible, i.e. $f^m(\partial I_\alpha) \neq \partial I_\beta$ for all $m \geq 1$ and $\alpha, \beta \in \mathcal{A}$ with $\pi_0(\beta) \neq 1$. Finally, let us consider the set of discontinuities $D(f) = \left\{ \beta_1, \beta_2, \ldots \beta_m \right\}$ of an interval exchange transformation $f$. A finite sequence of points $x_1,x_2,\ldots,x_k$ is a \textbf{$f$-chain} if $x_1, x_k$ both belong to $D(f) \cup \left\{ 0 \right\}$ and $f(x_i) = x_{i+1}$. A \textbf{maximal} $f$-chain is an $f$-chain, which is not a proper subset of another $f$-chain. Now suppose that $x \in D(f)$ is non-periodic and the	initial point in the unique maximal $f$-chain of length $N(x)$ to which it belongs. If $f^{N(x)}$ is discontinuous at $x$, then $x$ is a \textbf{fundamental discontinuity} of $f$. Further details on interval exchange transformation can be found e.g. in \cite{Ber18}, \cite{Via06} and \cite{Yoc06}.

\paragraph{Low-Discrepancy Orbits.} For a map $f: [0,1) \to [0,1)$, the \textbf{orbit} of a point $x \in [0,1)$ is the sequence $(f^i(x))_{i=1}^\infty$. An orbit  is called a \textbf{low-discrepancy orbit} if it defines a low-discrepancy sequence. The simplest class of examples of interval exchange transformations with low-discrepancy orbits are rotations which satisfy the assumptions of Theorem \ref{thm:kronecker}. Moreover, Theorem~\ref{thm:n=3} yields a complete classification for the remaining case of $3$-IETs. Besides that, the only known examples of interval exchange transformations with low-discrepancy orbits stem from \cite{Wei19}: For arbitrary $n \in \NN$, let $L \in \NN, S \in \NN_0$ such that $L+S = n$ and choose $\beta$ as the positive solution of $L \beta + S \beta^2 = 1$. Then the monodromy invariant $\rho_{L,S}$ is specified by
\begin{align*}
& \rho_{L,S}(i) = i + 1, \quad i = 1, \ldots, L-1,\\
& \rho_{L,S}(L) = L+S,\\
& \rho_{L,S}(L+1) = 1,\\
& \rho_{L,S}(i) = i - 1, \quad i = L+2, \ldots L + S.
\end{align*}
and the length data by $\lambda_i = \beta$ for $i=1,\ldots, L$ and $\lambda_i = \beta^2$ for $i = L+1, \ldots, L+S$. The corresponding interval exchange transformation is denoted by $f_{L,S}$. The following result holds.
\begin{thm} (\cite{Wei19}, Corollary 3.9) \label{thm:IET_examples}
	If $L \geq S$, then the sequence $(f_{L,S}^i(x_0))_{i=0}^\infty$ is a low-discrepancy sequence for all $x_0 \in [0,1)$.
\end{thm}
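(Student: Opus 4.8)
The plan is to use the \emph{self-similarity} of $f_{L,S}$ under Rauzy--Veech renormalisation and to convert it into a star-discrepancy bound by an Ostrowski-type counting argument; the hypothesis $L\ge S$ will enter precisely as a Pisot condition on the scaling constant $1/\beta$. I expect the bulk of the work — and the main obstacle — to sit in the first step, the combinatorial verification of the self-similarity.

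\textbf{Step 1: $f_{L,S}$ is a periodic point of Rauzy--Veech induction.} The combinatorial datum $\rho_{L,S}$ prescribes a finite sequence of Rauzy moves; I would run it and check that one returns to the same datum $\rho_{L,S}$ with the length vector $(\beta,\dots,\beta,\beta^{2},\dots,\beta^{2})$ replaced by a positive multiple of itself. The relation $L\beta+S\beta^{2}=1$ forces that multiple to be $\beta$, so the associated renormalisation matrix $A$ — a primitive nonnegative integer matrix — has Perron eigenvalue $1/\beta$, a root of the monic polynomial $x^{2}-Lx-S$; hence $1/\beta$ is an algebraic integer whose Galois conjugate equals $-S\beta$. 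A one-line computation with $1=L\beta+S\beta^{2}$ shows $S\beta<1$ if and only if $L\ge S$ (with the borderline $S\beta=1$, $\beta=1/S$, occurring for $S=L+1$). Thus $L\ge S$ is exactly the statement that $1/\beta$ is a Pisot number, and this is the structural reason the orbits are well distributed. The delicate part is verifying that the Rauzy path genuinely closes up: that $L\ge S$ keeps every intermediate length positive and every intermediate datum admissible, and that the eigenvalues of $A$ not seen by the two-dimensional $\QQ$-span of the length vectors also have modulus at most $1$ (I expect them to be $0$ or roots of unity, by the elementary shape of the path).

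\textbf{Step 2: Rokhlin towers.} Iterating the renormalisation $k$ times presents $[0,1)$ as a disjoint union of $n$ Rokhlin towers over a base of length $\beta^{k}$, with heights growing like $\beta^{-k}$. Primitivity of $A$ gives minimality and, together with the Keane condition (which I would also check, noting that $f_{L,S}$ has no connection even though its lengths span only a two-dimensional $\QQ$-space), unique ergodicity, so the invariant measure is Lebesgue. Because $1/\beta$ is Pisot the subdominant part of $A$ contracts, which keeps the tower heights \emph{balanced}: for every $x_{0}$ the number of orbit points lying in a prescribed level of the height-$k$ tower is exact up to an additive constant independent of $k$. In particular $f_{L,S}$ is linearly recurrent.

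\textbf{Step 3: discrepancy estimate.} Fix $N$ and $a\in[0,1)$ and expand $N$ in the mixed-radix system attached to the nested Rokhlin partitions, $N=\sum_{j\ge 0}c_{j}h_{j}$ with tower heights $h_{j}\asymp\beta^{-j}$ and digits $c_{j}$ bounded — boundedness of the digits is where the periodicity of the renormalisation (Step 1) is used. Counting the first $N$ points of $(f_{L,S}^{i}(x_{0}))_{i\ge 0}$ scale by scale against the tower levels, Step 2 gives that $\#\{\,i\le N:f_{L,S}^{i}(x_{0})\in[0,a)\,\}$ equals $N\lambda([0,a))$ up to an error of at most $\sum_{j}c_{j}\cdot O(1)$, where the $O(1)$ also absorbs the at most $n$ tower columns straddling the endpoint $a$. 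Since $N$ has only $O(\log N)$ nonzero digits, this error is $O(\log N)$, uniformly in $a$ and in $x_{0}$, i.e. $D_{N}^{*}\big((f_{L,S}^{i}(x_{0}))_{i}\big)=O(N^{-1}\log N)$, which is the assertion. As a consistency check, for $S=1$ one sees directly that $f_{L,1}$ is the rotation by $\beta^{2}$ with $\beta=[0;\overline{L}]$, so Theorem~\ref{thm:n=2} alone already yields the conclusion; and in the general case an alternative to Step 3 is simply to quote the known bounds for the star-discrepancy of orbits of self-similar interval exchange transformations of Pisot type, which apply verbatim once Step 1 is established.
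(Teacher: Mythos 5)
You are trying to prove a statement that this paper does not prove at all: Theorem~\ref{thm:IET_examples} is imported verbatim from \cite{Wei19} (Corollary 3.9), where the argument runs through the identification of the orbits of $f_{L,S}$ with Carbone's $LS$-sequences of points (built from the Kakutani-type splitting in which each interval of length $\ell$ is refined into $L$ pieces of length $\ell\beta$ and $S$ pieces of length $\ell\beta^2$) and then invokes the known discrepancy bound for those sequences, for which $L\ge S$ is exactly the hypothesis. Your route via Rauzy--Veech self-similarity and the Pisot property is genuinely different, and your arithmetic heart is correct: $1/\beta$ is a root of $x^2-Lx-S$ with conjugate $-S\beta$, and $S\beta<1$ if and only if $L\ge S$ (indeed $1=L\beta+S\beta^2\ge S\beta(1+\beta)$ when $L\ge S$, and conversely $L\le S-1$ forces $S\beta\ge1$). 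Your consistency check for $S=1$ is also right. If completed, your argument would arguably be more illuminating, since it explains $L\ge S$ as a Pisot/balancedness condition rather than as a hypothesis inherited from the $LS$-sequence literature.

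However, as written the proposal has two genuine gaps, both of which you flag but neither of which you close, and they are not cosmetic. First, the entire construction rests on Step 1 --- that the Rauzy--Veech path starting at $(\rho_{L,S},(\beta,\dots,\beta,\beta^2,\dots,\beta^2))$ returns to the same combinatorial datum with lengths scaled by $\beta$ --- and this is precisely the combinatorial verification you defer (``I would run it and check''). Without it there is no renormalisation matrix $A$, no tower structure, and no proof; and the verification is not uniform in $(L,S)$, so it must be done for the whole two-parameter family, not for one example. Second, the Pisot condition you establish controls only the two eigenvalues of $A$ lying in the $\QQ$-span of $\{1,\beta\}$; the discrepancy bound in Steps 2--3 needs $\sup_k\lVert A^k v\rVert<\infty$ for all $v$ in the full $(n-1)$-dimensional complement of the Perron direction, where $n=L+S$ can be large. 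You ``expect'' the remaining eigenvalues to be $0$ or semisimple roots of unity, but if any unit-modulus eigenvalue carried a nontrivial Jordan block the tower-level counts would drift polynomially in $k$ and the final estimate would degrade to $O((\log N)^2)$ or worse, destroying the conclusion. Finally, a small but real edge case: for $S=0$ (allowed by the statement, $S\in\NN_0$) one has $\beta=1/L$ rational and the orbit is periodic, so the claim needs $\beta$ irrational --- a point the present paper itself makes explicit in its later strengthening of this theorem.
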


\paragraph{Conjugation.} The set of all interval exchange transformations forms a group $\mathbb{G}$ under the operation of functional composition. Conjugacy classes and centralizers of $\mathbb{G}$ have recently been studied in \cite{Ber18} building on earlier work of \cite{Bos16}, \cite{Nov12} and \cite{Vor11}. Here, we aim to understand the effect of conjugation on the discrepancy of an orbit. Theorem~\ref{thm:main_thm}, which is of interest on its own, serves this purpose. It suffices to prove the second inequality mentioned therein because the first inequality follows from the second one by applying $f^{-1}$ to $P^*$.
\begin{proof}[Proof of Theorem~\ref{thm:main_thm}]
	Without loss of generality $n \geq 2$. Let $\lambda_1,\ldots,\lambda_n$ denote the length data of $f$ and define $\lambda_0 = 0$ and $\Lambda_{k} =  \sum_{i=0}^k \lambda_i$ for $k=0,\ldots,n$. We set
	$$D_{N,i}^*(P) := \sup_{\Lambda_{i-1} \leq b \leq \Lambda_{i}} \left| \frac{|P \cap [0,b)|}{N} - b \right|.$$
	Note that $D_{N,i}^*(P) \leq D_N^ *(P)$ for all $i$. The number of points of $P$ lying in interval $i$ is denoted by $\#_i$. Moreover, we interpret the monodromy invariant $\rho$ as permutation $\pi$ and let $\lambda_i^* = \lambda_{\pi^{-1}(i)}$ be the length data after permutation and set $\lambda^*_0 = 0$ and $\Lambda_k^* = \sum_{i=0}^k \lambda^*_i$ for $k=0,\ldots,n$. 
	By definition we have
	\begin{align*} D_N^*(P^*) = \sup_{0 \leq b \leq 1} \left| \frac{|P^* \cap [0,b)|}{N} - b \right|. \end{align*}
	Assume that the supremum is achieved in the $k$-th interval (after permutation). Hence
	\begin{align*} D_N^*(P^*) = \sup_{\Lambda_{k-1}^* \leq b \leq \Lambda_k^*} \left| \frac{|P^* \cap [0,b)|}{N} - b \right| \end{align*}
	For the sake of clarity and completeness we consider the case $k=n$ first. Then 
	\begin{align} \label{eq1}
		D_N^*(P^*) = \left| \frac{L^*}{N} - b^* \right|
	\end{align}
	with $L^* = \sum_{i \neq \pi^{-1}(k)} \#_i + L_k \leq N, 0 \leq L_k \leq \#_{\pi^{-1}(k)} $ and $b^* = \sum_{i \neq \pi^{-1}(k)} \lambda_i + b_k \leq 1, 0 \leq b_k \leq \lambda_{\pi^{-1}(k)}$. Thus we get
	\begin{align*} 
		D_N^*(P^*) & = \left| \frac{\sum\limits_{i \neq \pi^{-1}(k)} \#_i}{N} - \sum\limits_{i \neq \pi^{-1}(k)} \lambda_i + \frac{L_k}{N} - b_k \right|\\
		& \leq \left| \frac{\sum\limits_{i < \pi^{-1}(k)} \#_i}{N} - \sum_{i < \pi^{-1}(k)} \lambda_i + \frac{L_k}{N} - b_k \right| + \left| \frac{\sum\limits_{i > \pi^{-1}(k)} \#_i}{N} - \sum\limits_{i > \pi^{-1}(k)} \lambda_i\right|\\
		& \leq D_{N,\pi^{-1}(k)}^*(P) + \left| \frac{N - \sum\limits_{i \leq \pi^{-1}(k)} \#_i}{N} - (1 - \sum\limits_{i \leq \pi^{-1}(k)} \lambda_i) \right|\\
		& \leq D_{N,\pi^{-1}(k)}^*(P) + D_{N,\pi^{-1}(k)}^*(P) \leq 2 D_N^*(P) \leq nD_N^*(P).
	\end{align*}
	Now let $k < n$ be arbitrary and let $J = \left\{j_1 < j_2  < \ldots < j_{k-1}\right\}$ denote the set of indices with $\pi^{-1}(j_i) < \pi^{-1}(k)$. We use similar notation as in the case $k=n$, namely we assume \eqref{eq1} with $L^* = \sum_{J} \#_i + L_k \leq N, 0 \leq L_k \leq \#_{\pi^{-1}(k)} $ and $b^* = \sum_{J} \lambda_i + b_k \leq 1, 0 \leq b_k \leq \lambda_{\pi^{-1}(k)}$. Then
	{\small 
	\begin{align*} 
	D_N^*(P^*) & = \left| \frac{\sum\limits_{J} \#_i}{N} - \sum\limits_{J} \lambda_i + \frac{L_k}{N} - b_k \right|\\
	& \leq \left| \frac{\sum\limits_{i < \pi^{-1}(k)} \#_i}{N} - \sum_{i < \pi^{-1}(k)} \lambda_i + \frac{L_k}{N} - b_k \right| + \left| \frac{\sum\limits_{i > \pi^{-1}(k)} \#_i}{N} - \sum_{i > \pi^{-1}(k)} \lambda_i \right|\\
	& \qquad + \left| \frac{\sum\limits_{i \neq \pi^{-1}(k), i \notin J} \#_i}{N} - \sum_{i \neq \pi^{-1}(k), i \notin J} \lambda_i \right| \\
	& \leq 2 D_N^*(P) + \left| \frac{\sum\limits_{i \neq \pi^{-1}(k), i \notin J} \#_i}{N} - \sum_{i \neq \pi^{-1}(k), i \notin J} \lambda_i \right|
	\end{align*}}
	If $j_1 = 1, j_2 = 2, \ldots j_r = r$ and $j_{r+1} > r+1$ for $r \in \NN$ then
	{\small
	\begin{align*}
	\left| \sum\limits_{i \neq \pi^{-1}(k), i \notin J} \left( \frac{\#_i}{N} - \lambda_i \right) \right| & \leq  \left| \sum_{i=1}^r \left( \frac{\#_i}{N} - \lambda_i \right) \right| + \left| \sum\limits_{i \neq \pi^{-1}(k), i \notin J \setminus \{1,\ldots,r\}} \left( \frac{\#_i}{N} - \lambda_i \right) \right|\\
	& \leq D_N^*(P) + \left| \sum\limits_{i \neq \pi^{-1}(k), i \notin J \setminus \{1,\ldots,r\}} \left( \frac{\#_i}{N} - \lambda_i \right) \right|
	\end{align*}}
	and $J^* = J \setminus \left\{ 1,...,r \right\}$ has $r$ elements less than $J$. If $j_1 > 1$, then
	{\small
	\begin{align*}
		\left| \sum\limits_{i \neq \pi^{-1}(k), i \notin J} \left( \frac{\#_i}{N} - \lambda_i \right) \right| & \leq  \left| \sum_{i=1}^{j_1} \left( \frac{\#_i}{N} - \lambda_i \right) \right| + \left| \sum\limits_{i \neq \pi^{-1}(k), i \notin J \setminus \{j_1\}} \left( \frac{\#_i}{N} - \lambda_i \right) \right|\\
		& \leq D_N^*(P) + \left| \sum\limits_{i \neq \pi^{-1}(k), i \notin J \setminus \{j_1\}} \left( \frac{\#_i}{N} - \lambda_i \right) \right|
	\end{align*}}
	and $J^* = J \setminus \left\{ j_1 \right\}$ has one element less than $J$. In both of the cases it follows by induction on the number of elements in $J$ that
	$$\left| \sum\limits_{i \neq \pi^{-1}(k), i \notin J} \left( \frac{\#_i}{N} - \lambda_i \right) \right| \leq (k-1) D_N^*(P).$$
	In total, the calculation yields
	$$D_N^*(P^*) \leq (k + 1) D_N^*(P).$$
	Since $k < n$, the claim follows.
\end{proof}
In what follows we discuss applications of Theorem~\ref{thm:main_thm} and Corollary~\ref{cor:ld_conjugacy} respectively. If $f^i(y)_{i=1}^\infty$ is known to be a low-discrepancy sequence, then Corollary~\ref{cor:ld_conjugacy} can be applied directly by choosing $x = g(y)$. Furthermore, note that neither the number of intervals nor the permutations of the two interval exchange transformations have to coincide. If $f,g$ are two $2$-IETs, then they can both be interpreted as rotations of the circle and hence $(f(x_i))_{i=1}^\infty$ and $(gfg^{-1}(x_i))_{i=1}^\infty$  are equal. This also shows that the low-discrepancy sequence examples $f_{L,S}(x)$ cannot be generated by Kronecker sequences using conjugation. Moreover, Corollary~\ref{cor:ld_conjugacy} implies that low-discrepancy is a conjugacy class invariant if all orbits of $f$ are known to yield low-discrepancy sequences. From Theorem~\ref{thm:IET_examples}, we hence get.

\begin{cor} For any interval exchange transformation $g$ and any $x \in [0,1)$, the sequence $gf_{L,S}g^{-1}(x)$ is a low-discrepancy sequence if $L \geq S$. \end{cor}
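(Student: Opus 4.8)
The plan is simply to combine Theorem~\ref{thm:IET_examples} with Corollary~\ref{cor:ld_conjugacy}; no new ideas are needed, only a little bookkeeping. First I would fix an arbitrary interval exchange transformation $g$ and an arbitrary point $x \in [0,1)$, and set $y := g^{-1}(x)$. This is legitimate because the set of all interval exchange transformations is the group $\mathbb{G}$, so $g^{-1} \in \mathbb{G}$ is again an IET, in particular a bijection of $[0,1)$, and the conjugate $g f_{L,S} g^{-1}$ again lies in $\mathbb{G}$.

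Next, since $L \geq S$ by hypothesis, Theorem~\ref{thm:IET_examples} applies with $x_0 = y$ and yields that $(f_{L,S}^i(y))_{i=0}^\infty$ is a low-discrepancy sequence. Discarding the single initial term $f_{L,S}^0(y) = y$ alters the star-discrepancy of the first $N$ points by at most a term of order $1/N$, so $(f_{L,S}^i(y))_{i=1}^\infty = (f_{L,S}^i(g^{-1}(x)))_{i=1}^\infty$ is a low-discrepancy sequence as well.

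Finally I would invoke Corollary~\ref{cor:ld_conjugacy} with this choice of $f = f_{L,S}$ and the given $g$: it asserts that $(f_{L,S}^i(g^{-1}(x)))_{i=1}^\infty$ is a low-discrepancy sequence if and only if $\left((g f_{L,S} g^{-1})^i(x)\right)_{i=1}^\infty$ is, using the identity $(g f_{L,S} g^{-1})^i(x) = g f_{L,S}^i(g^{-1}(x))$ recorded just before that corollary. The ``only if'' direction, together with the previous paragraph, gives exactly the assertion that $g f_{L,S} g^{-1}(x)$ is a low-discrepancy sequence.

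I do not expect any genuine obstacle. The only things to be careful about are the two purely formal points already flagged: that $g^{-1}$ is a bona fide IET so that Corollary~\ref{cor:ld_conjugacy} is applicable to the conjugacy class of $f_{L,S}$, and that passing between the index sets $\{0,1,2,\dots\}$ and $\{1,2,\dots\}$ is harmless since it changes only one point of the sequence and hence does not affect the $O(N^{-1}\log N)$ bound.
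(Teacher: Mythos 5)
Your proof is correct and follows exactly the route the paper intends: the corollary is stated as an immediate consequence of combining Theorem~\ref{thm:IET_examples} (which gives low-discrepancy of every $f_{L,S}$-orbit when $L \geq S$) with Corollary~\ref{cor:ld_conjugacy} applied to $y = g^{-1}(x)$. Your extra care about the index shift from $i \geq 0$ to $i \geq 1$ and about $g^{-1}$ being an IET is harmless bookkeeping that the paper leaves implicit.
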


Although the known examples of $n$-IETs with low-discrepancy orbits are non-trivial in the sense that they cannot be generated by rotations exclusively, the following example shows amongst others that the map $f_{2,2}$ is conjugate to a $3$-IET.

\begin{exa} \label{exa:oldform} Let $f_{2,2}$ be the 4-IET and let $g^{-1}$ be the circle rotation by the angle $z$. Here we consider the three special cases $z \in \left\{ \beta, 2\beta, 2\beta + \beta^2 \right\}$. If $z = \beta$, then $gf_{2,2}g^{-1}$ has length data $(\beta,\beta^2,\beta^2,\beta)$ and monodromy invariant $(3,4,2,1)$. By merging the third and the fourth interval (before rotation), we see that $gf_{2,2}g^{-1}$ can also be regarded as a $3$-IET with length data $(\beta,\beta^2,1/2)$ and monodromy invariant $(3,2,1)$. In the case $z = 2\beta$ we have $\lambda = (\beta^2,\beta,\beta,\beta^2)$ and $\rho = (3,1,4,2)$ after conjugation. This $4$-IET cannot be simplified to a $3$-IET. Finally, if $z= 2\beta + \beta^2$, then $gf_{2,2}g^{-1}$ can be represented by an $3$-IET with $\lambda = (1/2,\beta,\beta^2)$ and monodromy invariant $(3,2,1)$.
\end{exa}

In fact, being conjugate to a $3$-IET is not a special feature of the examples $f_{L,S}$ but the typical case for $4$-IETs. Whenever one of the following conditions is satisfied then (possibly) after conjugation by a rotation an $n$-IET can be represented by an IET with a smaller number of intervals involved:
\begin{align*}
(1) \quad & \rho(i+1) = \rho(i) + 1, \quad \textrm{for some } 1 \leq j \leq n-1,\\
(2) \quad & \rho(i) = n, \rho(i+1) = 1, \quad \textrm{for some } 1 \leq j \leq n-1,\\
(3) \quad & \rho(n) = j, \rho(1) = j+1,\\
(4) \quad & \rho(n) = n, \rho(1) = 1.
\end{align*}
Generalizing notation from \cite{Ber17}, we call the monodromy invariant \textbf{strongly separating} if it does not fulfill any of the properties $(1)-(4)$. The following theorem shows that there is only one admissible monodromy invariant of a $4$-IET which cannot be achieved by conjugation from a $3$-IET (compare Figure~1).

\begin{thm} \label{thm:old_forms_n4} Let $f$ define an admissible $4$-IET with monodromy invariant $\rho \neq (4,3,2,1)$. Then $f$ is conjugate to a $3$-IET.
\end{thm}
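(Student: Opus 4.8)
The plan is to enumerate the admissible monodromy invariants for $n=4$ and, for each one other than $(4,3,2,1)$, exhibit a rotation $g$ such that conjugating by $g$ (i.e.\ inserting an extra cut-point coming from the rotation and then collapsing a pair of adjacent intervals whose images are adjacent) produces a $3$-IET. The conceptual engine is the observation, already sketched in Example~\ref{exa:oldform} and in conditions $(1)$--$(4)$ above: if $\rho$ has an index $i$ with $\rho(i+1)=\rho(i)+1$, then in the partition (before rotation) the intervals $I_i$ and $I_{i+1}$ are mapped to adjacent intervals, so they can be merged into a single interval and $f$ is already a $3$-IET without any conjugation; the other three conditions are the ``wrap-around'' analogues, and they become mergeable conditions after pre-composing with a suitable rotation $g^{-1}$, since a rotation simply shifts which point plays the role of $0$ and can turn a wrap-around adjacency into a genuine adjacency.

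First I would set up the bookkeeping: normalize $\pi_0=\mathrm{Id}$ so that $\rho=\pi_1$ is an element of $S_4$, and restrict to those $\rho$ that are admissible in the sense of \eqref{eq4}, i.e.\ $\rho(\{1,\dots,k\})\neq\{1,\dots,k\}$ for $k=1,2,3$; this rules out $\rho(1)=1$, $\rho(4)=4$ together with the obvious block-permutations, leaving a short explicit list of admissible permutations of $\{1,2,3,4\}$ (there are few enough to tabulate). Second, I would verify that every admissible $\rho$ on this list, \emph{except} $(4,3,2,1)$, satisfies at least one of the four ``separating-failure'' conditions $(1)$--$(4)$; this is the combinatorial heart of the argument and is a finite check. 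Third, for conditions $(2)$, $(3)$, $(4)$ I must actually produce the conjugating rotation and check that the resulting IET genuinely has an interval-adjacency allowing a merge: the right choice is to rotate so that the new origin falls at the boundary that makes the ``$n$ next to $1$'' (condition $(2)$), or the ``last-block $j$, first-block $j+1$'' (condition $(3)$), or the ``fixed end-blocks'' (condition $(4)$) adjacency into an honest adjacency of two consecutive subintervals whose $f$-images are consecutive; then merging those two subintervals yields a $3$-IET, and Corollary~\ref{cor:ld_conjugacy} guarantees the conjugate lives in the same discrepancy class. One has to be a little careful that the merged $3$-IET may itself be non-admissible (splitting further into a $2$-IET or $1$-IET), but that is harmless for the statement, which only claims conjugacy to \emph{a} $3$-IET.

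The main obstacle I anticipate is bookkeeping precision rather than depth: one must be sure that inserting the rotation's extra discontinuity and then collapsing a pair of adjacent intervals is a legitimate move that yields an honest IET conjugate to the original (the identity $gf^ig^{-1}(x)=g(f^i(g^{-1}(x)))$ from the paragraph before Corollary~\ref{cor:ld_conjugacy} handles the conjugacy, but one must track how $\pi_0,\pi_1$ and the length data transform under a rotation, which is a finite but error-prone computation). A secondary subtlety is confirming that $(4,3,2,1)$ truly is strongly separating, i.e.\ fails all of $(1)$--$(4)$: for $\rho=(4,3,2,1)$ we have $\rho(i+1)=\rho(i)-1\neq\rho(i)+1$ for all $i$, there is no $i$ with $\rho(i)=4,\rho(i+1)=1$ (since $\rho(1)=4$ and $\rho(2)=3$), $\rho(4)=1$ and $\rho(1)=4\neq 2$, and $\rho(4)=1\neq 4$; so indeed none of the conditions holds, which is precisely why it is the exceptional case and must be excluded from the statement. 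Once the finite table is assembled and the three wrap-around rotations are written down explicitly (mirroring the three cases in Example~\ref{exa:oldform}), the theorem follows.
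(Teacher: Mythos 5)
Your proposal is correct and follows essentially the same route as the paper: enumerate the $13$ admissible monodromy invariants, observe that $(4,3,2,1)$ is the only strongly separating one (i.e.\ the only one failing all of conditions $(1)$--$(4)$), and for each of the others use conjugation by a suitable rotation to reduce the number of discontinuities and merge two adjacent subintervals into one, yielding a $3$-IET. The paper compresses the merging step into a citation of Proposition~2.3 of \cite{Ber17}, whereas you spell it out explicitly, but the argument is the same.
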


\begin{proof} We consider the $13$ admissible monodromy invariants of 4-IETs. These are
	\begin{align*}
	& (4,3,2,1), \quad (4,1,3,2), \quad (3,1,4,2), \quad (4,2,1,3), \quad (2,4,3,1),\\
	& (3,2,4,1), \quad (2,4,1,3), \quad (4,2,3,1), \quad (4,1,2,3), \quad (4,3,1,2),\\
	& (3,4,1,2), \quad (2,3,4,1), \quad (3,4,2,1). 
	\end{align*}
	The only strongly separating monodromy invariant is $\rho = (4,3,2,1)$. Following the lines of the proof of Proposition~2.3 in \cite{Ber17}, the map $f$ is hence conjugate to a $2$- or $3$-IET because the number of discontinuities after an appropriate rotation is at most $3$.
\end{proof}

Summing up, we therefore have a criterion at hand (Theorem~\ref{thm:kronecker}, Theorem~\ref{thm:n=3}) to decide, if a given interval exchange transformation with $n=4$ intervals and monodromy invariant $\rho \neq (4,3,2,1)$ has a low-discrepancy orbit or not.

\begin{defi} We call an $n$-IET $h$ which is given by $h=gfg^{-1}$ with $f$ being an $m$-IET with $m < n$ and $g$ an arbitrary interval exchange transformation an \textbf{old transformation}. Otherwise $h$ is called a \textbf{new transformation}.
\end{defi}

Thus, Theorem~\ref{thm:old_forms_n4} can be restated in the form that every $4$-IET with $\rho \neq (4,3,2,1)$ is an old transformation. For $n=5$, there are $21$ of $71$ admissible monodromy invariants that can potentially yield new transformations and for $n=6$, there are $126$ out of $461$. Note that our notation of strong separation only takes into account conjugation by rotations and therefore not all of the identified potential new transformations are truly new. Indeed, the following example shows that not every interval exchange transformation that is strongly separating is necessarily a new transformation.
\begin{exa} \label{exa:strongly_separating} Let $f$ be a 2-IET with $\rho_f = (2,1)$ and $g$ be a $3$-IET with $\rho_g = (3,2,1)$. We choose $\beta$ as the unique positive solution of $2\beta + 2\beta^2 = 1$ and let $\lambda_f = (1-\beta,\beta)$ and $\lambda_g = (\beta,\beta^2,1-\beta-\beta^2)$. Then $gfg^{-1}$ has length data $(\beta,\beta^2,\beta^2,\beta^2,\beta-\beta^2)$ and monodromy invariant $\rho_{gfg^{-1}} = (4,2,5,3,1)$ which is a strongly separating IET.
\end{exa}
Moreover, the interval exchange transformation $f_{L,S}$ is an old transformation for any choice $L \in \mathbb{N}, S \in \mathbb{N}_0$: by counting discontinuities as in Example~\ref{exa:oldform}, every map $f_{L,S}$ can be seen to be conjugate to a $3$-IET with monodromy invariant $\rho = (3,2,1)$ and length data $\lambda_1 = \beta, \lambda_2 = (L-1)\beta, \lambda_3 = S\beta^2$. This fact suffices to generalize Theorem~\ref{thm:IET_examples} and leave away the condition $L \geq S$ therein.
\begin{thm} Let $L \in \mathbb{N}, S \in \mathbb{N}_0$ and let $\beta > 0$ be the positive solution of $L\beta + S\beta^2 = 1$. Then the sequence $(f_{L,S}^i(x_0))_{i=0}^\infty$ is a low-discrepancy sequence for all $x_0 \in [0,1)$ if and only if $\beta$ is irrational.
\end{thm}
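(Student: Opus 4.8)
The plan is to transfer the question, via the conjugation machinery of Section~\ref{sec:inequ}, to the $3$-interval exchange transformation already analysed in Theorem~\ref{thm:n=3}, and then to use the fact that for a quadratic irrational the Ces\`aro-mean hypothesis of that theorem holds automatically. The starting point is the observation recorded just before the statement: $f_{L,S}$ is conjugate, through a suitable rotation $g$, to an interval exchange transformation $h$ with monodromy invariant $\rho=(3,2,1)$ and length data $\lambda_1=\beta$, $\lambda_2=(L-1)\beta$, $\lambda_3=S\beta^2$, where a length-$0$ interval is dropped (so $h$ is in fact a rotation when $L=1$ or $S=0$). Since $g$ is a bijection of $[0,1)$, Corollary~\ref{cor:ld_conjugacy} reduces the claim to the assertion that every orbit of $h$ is a low-discrepancy sequence if and only if $\beta$ is irrational.

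I would first handle the main case $L\geq 2$, $S\geq 1$, in which $h$ is a genuine $3$-IET of the form covered by Theorem~\ref{thm:n=3}. Using the defining relation $L\beta+S\beta^2=1$ one gets $\lambda_2+\lambda_3=1-\beta$, hence
$$\frac{\lambda_2+\lambda_3}{1+\lambda_2}=\frac{1-\beta}{1+(L-1)\beta}=:\gamma .$$
The assignment $t\mapsto (1-t)/(1+(L-1)t)$ is a M\"obius transformation with integer coefficients and nonzero determinant $-L$, so it restricts to a bijection of $\QQ$ onto $\QQ$; consequently $\gamma$ is irrational precisely when $\beta$ is. This settles the ``only if'' direction in the main case: if $\beta\in\QQ$ then $\gamma\in\QQ$, and Theorem~\ref{thm:n=3} forces $h$, hence $f_{L,S}$, not to generate low-discrepancy orbits for all starting points.

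For the ``if'' direction I would assume $\beta\notin\QQ$; then $S\geq 1$ (otherwise $\beta=1/L$), so $\beta$ is a root of $Sx^2+Lx-1$ and thus a quadratic irrational, and therefore $\gamma$, being the image of $\beta$ under an invertible M\"obius map with integer coefficients, is a quadratic irrational as well. By Lagrange's theorem (see e.g.\ \cite{Khi63}) the continued fraction expansion of $\gamma$ is eventually periodic, so its partial quotients are bounded and hence so are the Ces\`aro means $a_m^{(1)}(\gamma)$; Theorem~\ref{thm:n=3} then gives that every orbit of $h$ is low-discrepancy, and Corollary~\ref{cor:ld_conjugacy} transfers this back to $f_{L,S}$. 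The two degenerate cases follow the same pattern. If $S=0$, then $\beta=1/L\in\QQ$ and $h$ is a rotation by a rational number, whose orbits assume only finitely many values and are therefore not low-discrepancy, in agreement with the claim. If $L=1$ and $S\geq 1$, then $h$ is the rotation by $\lambda_3=1-\beta$ and Theorem~\ref{thm:kronecker} applies directly: $1-\beta$ is either rational, in which case the orbit is not low-discrepancy, or a quadratic irrational, in which case its continued fraction is eventually periodic, the Ces\`aro means $a_m^{(1)}(1-\beta)$ are bounded, and the orbit is low-discrepancy.

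I expect the only genuinely non-formal ingredient to be the implication ``$\gamma$ irrational $\Rightarrow$ $a_m^{(1)}(\gamma)$ bounded'': this is where the quadratic nature of $\beta$, inherited by $\gamma$, together with Lagrange's periodicity theorem, collapses the two-part hypothesis of Theorem~\ref{thm:n=3} to the single condition of irrationality, which is exactly what produces the clean dichotomy. Everything else, namely the conjugation reduction, the simplification $\frac{\lambda_2+\lambda_3}{1+\lambda_2}=\frac{1-\beta}{1+(L-1)\beta}$ coming from $L\beta+S\beta^2=1$, and the handling of the two length-zero degeneracies, is routine.
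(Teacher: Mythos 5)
Your proof is correct and follows essentially the same route as the paper: conjugate $f_{L,S}$ to the $3$-IET with length data $(\beta,(L-1)\beta,S\beta^2)$ and monodromy invariant $(3,2,1)$, observe that $\nu=\frac{1-\beta}{1+(L-1)\beta}$ is a quadratic irrational whenever $\beta$ is irrational (hence has bounded partial quotients and bounded Ces\`aro mean), and conclude via Theorem~\ref{thm:n=3} and Corollary~\ref{cor:ld_conjugacy}. The only differences are cosmetic: the paper dismisses rational $\beta$ by noting that $f_{L,S}$ then has finite order, whereas you route that direction through the integer M\"obius-map argument, and you additionally spell out the degenerate cases $L=1$ and $S=0$, which the paper glosses over.
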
  
\begin{proof} If $\beta$ is rational then $f_{L,S}$ has finite order and cannot have a low-discrepancy orbit. If $\beta$ is irrational, then 
	$$\nu = \frac{\lambda_2+\lambda_3}{1+ \lambda_2} = \frac{1-\beta}{1+(L-1)\beta}$$
is a real algebraic number of degree $2$. In particular, $\nu$ has bounded partial quotients and thus also its Ces\`aro mean is bounded. Therefore, the claim follows from Theorem~\ref{thm:n=3}.
\end{proof}

In the case of monodromy invariant $\rho = (4,3,2,1)$ we finally give sufficient conditions under which $f$ is a new transformation.

\begin{prop} \label{prop:newtrans} Let $f$ be an arbitrary $4$-IET with monodromy invariant $(4,3,2,1)$ which satisfies the Keane condition. Furthermore assume that $f^2$ is discontinuous at $f^{-1}(0)$. Then $f$ is a new transformation. \end{prop}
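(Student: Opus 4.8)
The plan is to prove that $f$ has three fundamental discontinuities; by the work of Bernazzani~\cite{Ber18}, the number of fundamental discontinuities is a conjugacy‑class invariant of IETs, and an $m$‑IET has at most $m-1$ points of discontinuity, hence at most $m-1$ fundamental discontinuities. Thus if $f$ were an old transformation, say $f=g\tilde f g^{-1}$ with $\tilde f$ an $m$‑IET, $m\le 3$, and $g$ an arbitrary IET, it would have at most two fundamental discontinuities, a contradiction. So it suffices to show that all three discontinuities of $f$ are fundamental. After normalizing $\pi_0=\Id$ we have $\pi_1=\rho=(4,3,2,1)$, the images of the subintervals occur in the order $I_4,I_3,I_2,I_1$, and a short computation shows that the translation constant of $f$ changes at every breakpoint, so $D(f)=\{\Lambda_1,\Lambda_2,\Lambda_3\}$ with $\Lambda_k=\lambda_1+\dots+\lambda_k$; moreover $f(\Lambda_3)=0$, i.e.\ $f^{-1}(0)=\Lambda_3$.

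I would use the Keane condition in the form: for every $m\ge 1$ the set $f^m(\{0,\Lambda_1,\Lambda_2,\Lambda_3\})$ is disjoint from $\{\Lambda_1,\Lambda_2,\Lambda_3\}$, and each of $0,\Lambda_1,\Lambda_2,\Lambda_3$ has an infinite, hence non‑periodic, orbit. For $k\in\{1,2\}$ this shows that neither the forward nor the backward orbit of $\Lambda_k$ ever returns to $D(f)\cup\{0\}$: meeting $\{\Lambda_1,\Lambda_2,\Lambda_3\}$ is forbidden directly; $f^m(\Lambda_k)=0$ with $m\ge 2$ would force $f^{m-1}(\Lambda_k)=f^{-1}(0)=\Lambda_3$, again forbidden, while $f(\Lambda_1)=\lambda_3+\lambda_4\ne 0$ and $f(\Lambda_2)=\lambda_4\ne 0$ dispose of $m=1$; and $f^{-m}(\Lambda_k)\in D(f)\cup\{0\}$ is excluded by Keane in the same way. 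Consequently the maximal $f$‑chain through $\Lambda_k$ is the singleton $\{\Lambda_k\}$, so $N(\Lambda_k)=1$ with $\Lambda_k$ its initial point, and $f=f^{N(\Lambda_k)}$ is discontinuous at $\Lambda_k$; hence $\Lambda_1$ and $\Lambda_2$ are fundamental discontinuities (this part in fact holds for every $4$‑IET with $\rho=(4,3,2,1)$ satisfying the Keane condition).

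It remains to treat $\Lambda_3$. Since $f(\Lambda_3)=0$, the sequence $\Lambda_3,0$ is an $f$‑chain, and it is maximal: it cannot be extended backwards because the backward orbit of $\Lambda_3$ never meets $D(f)\cup\{0\}$ (Keane, as above), and it cannot be extended forwards because the forward orbit of $0$ misses $\{\Lambda_1,\Lambda_2,\Lambda_3\}$ (Keane) and never returns to $0$ (infinite orbit). Hence $\Lambda_3$ is the initial point of its maximal $f$‑chain and $N(\Lambda_3)=2$, so the assumption that $f^2$ is discontinuous at $f^{-1}(0)=\Lambda_3$ says precisely that $\Lambda_3$ is a fundamental discontinuity. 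Together with the previous paragraph this yields three fundamental discontinuities, so $f$ is a new transformation. The main work is the combinatorial bookkeeping just sketched — invoking the Keane condition in exactly the right form to rule out every coincidence between the orbit of the marked point $0$ and those of the breakpoints, and correctly identifying the \emph{initial} point of each maximal $f$‑chain. One may moreover observe that the hypothesis on $f^2$ is automatic here: $f^2(\Lambda_3^-)=f(\lambda_3+\lambda_4)$ while $f^2(\Lambda_3^+)=f(0)$, and since $f$ is injective with $\lambda_3+\lambda_4\neq 0$ (and $\lambda_3+\lambda_4=f(\Lambda_1)\notin\{\Lambda_1,\Lambda_2,\Lambda_3\}$ by Keane, so $\lambda_3+\lambda_4$ is a point of continuity of $f$), these two one‑sided limits must differ; thus the conclusion holds, with the same proof, for every $4$‑IET with $\rho=(4,3,2,1)$ satisfying the Keane condition.
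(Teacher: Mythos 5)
Your overall strategy is the same as the paper's: show that $f$ has too many fundamental discontinuities to be conjugate to a $2$- or $3$-IET, using Bernazzani's conjugacy invariance. The chain analysis for $\Lambda_1,\Lambda_2,\Lambda_3$ is careful and essentially correct. But there is a genuine gap in the counting, and it sits exactly where the two black-box citations from \cite{Ber18} meet. You derive the upper bound ``an $m$-IET has at most $m-1$ fundamental discontinuities'' from the fact that it has at most $m-1$ discontinuities as a map of $[0,1)$. In Bernazzani's framework --- the one in which the conjugacy invariance you are invoking is actually proved --- the point $0$ also counts as a potential fundamental discontinuity (in effect one works on the circle, where $0$ is a genuine break point), so the correct bound is $m$, not $m-1$; this is precisely what the paper cites as Proposition~4.6 of \cite{Ber18} (``at most \emph{three}'' for the $2$- and $3$-IETs in question), and it is why Corollary~4.5 there speaks of $n$-IETs with $n$ fundamental discontinuities. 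With the correct bound, your count of $3$ fundamental discontinuities for $f$ does \emph{not} contradict conjugacy to a $3$-IET; one has to show $f$ has $4$ of them, i.e.\ that $0$ is also fundamental, which is what the paper does.

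You can see that your combination of ingredients cannot all be right from inside the paper itself: take any admissible $4$-IET with monodromy $\rho=(2,4,1,3)$ satisfying the Keane condition. Your own method of counting gives it $3$ fundamental discontinuities as well (two singleton chains, plus the breakpoint mapping to $0$, where $f^2$ is discontinuous by exactly the injectivity argument you use for $\Lambda_3$). If ``invariance $+$ at most $2$ for a $3$-IET'' were valid, this IET would be a new transformation --- contradicting Theorem~\ref{thm:old_forms_n4}, which says it is conjugate to a $3$-IET. So the bound ``$m-1$'' (equivalently, the omission of $0$ from the count) is the false step, and the proof as written does not close. Your side observation that the hypothesis on $f^2$ at $f^{-1}(0)=\Lambda_3$ is automatic under Keane is a nice point and appears to be correct as a statement about $f^2$, but it does not rescue the argument: the missing fourth fundamental discontinuity is $0$, not $\Lambda_3$.
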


To prove Proposition~\ref{prop:newtrans}, we use two results from \cite{Ber18} (Proposition~4.3, Corollary~4.5) which we combine here to get the following lemma.

\begin{lem} (\cite{Ber18}) \label{lem:ber} Let $f,g$ be two interval exchange transformations. Then $f$ and $gfg^{-1}$ have the same number of fundamental discontinuities.
\end{lem}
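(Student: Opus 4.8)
The statement is attributed to \cite{Ber18}, so the task is essentially to assemble Proposition~4.3 and Corollary~4.5 of that reference into the form stated. The plan is to argue that the number of fundamental discontinuities is itself a conjugacy invariant of an interval exchange transformation. First I would recall the relevant structural result from \cite{Ber18}: fundamental discontinuities are characterized intrinsically in terms of the action of $f$ on its orbit structure, namely as the non-periodic initial points $x$ of maximal $f$-chains at which $f^{N(x)}$ fails to be continuous. The key is that this notion is defined purely in terms of the \emph{combinatorial/dynamical} data of $f$ (the partition into maximal monotonicity pieces of the iterates, and how chains of preimages of discontinuities terminate), and not in terms of the specific coordinates on $[0,1)$.

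Second, I would make precise how conjugation by an interval exchange transformation $g$ transports this data. Writing $h = gfg^{-1}$, one has $h^k = g f^k g^{-1}$ for all $k \geq 1$, so the discontinuity set satisfies $D(h) \subseteq g(D(f)) \cup D(g) \cup D(g^{-1})$, and more importantly $g$ sets up a bijection between the $f$-chains and the $h$-chains: if $x_1, \dots, x_k$ is an $f$-chain then $g(x_1), \dots, g(x_k)$ is an $h$-chain, up to the finitely many places where $g$ or $g^{-1}$ is itself discontinuous. The subtlety — and what I expect to be the main obstacle — is precisely bookkeeping these finitely many ``extra'' discontinuities introduced by $g$ and $g^{-1}$: one must check that they do not create or destroy \emph{fundamental} discontinuities, only ordinary ones, so that the count of fundamental discontinuities is preserved even though the count of plain discontinuities generally is not. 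This is exactly the content that \cite{Ber18} establishes (Proposition~4.3 identifies which discontinuities are fundamental in terms of chain-termination behaviour that is conjugation-equivariant, and Corollary~4.5 draws the invariance conclusion).

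Third, with the bijection between maximal $f$-chains and maximal $h$-chains in hand (respecting chain length $N(\cdot)$ away from the spurious points, and with the spurious points absorbed into longer chains without affecting fundamentality), one concludes that $x \in D(f)$ is a fundamental discontinuity of $f$ if and only if $g(x)$ is a fundamental discontinuity of $h = gfg^{-1}$. Hence the two maps have the same (finite) number of fundamental discontinuities. I would present this as: quote the intrinsic characterization from \cite{Ber18}, observe its conjugation-equivariance via $h^k = gf^kg^{-1}$, and invoke Corollary~4.5 of \cite{Ber18} for the invariance, adding the remark that this is the only place where the finiteness of $D(g)$ is used. Since the lemma is explicitly a repackaging of two cited results, a short proof citing them and sketching the equivariance is appropriate rather than a from-scratch argument.
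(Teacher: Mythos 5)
Your proposal matches the paper exactly in spirit: the paper gives no independent proof of Lemma~\ref{lem:ber}, but simply cites Proposition~4.3 and Corollary~4.5 of \cite{Ber18}, which is precisely what you do (your additional sketch of the conjugation-equivariance of $f$-chains, with the honest caveat about the spurious discontinuities introduced by $g$ and $g^{-1}$, goes beyond what the paper records but correctly defers the delicate bookkeeping to the cited results). Nothing further is needed.
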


\begin{proof}[Proof of Proposition~\ref{prop:newtrans}] Suppose that $f = ghg^{-1}$. If $f$ was an old transformation, then $h$ would either have monodromy invariant $\rho = (2,1)$ or $\rho = (3,2,1)$ because all other $3$-IETs are either not admissible or a rotation. By Proposition~4.6 in \cite{Ber18}, the IET $h$ can have at most three fundamental discontinuities. However since $f^2$ is discontinuous at $f^{-1}(0)$, the IET $f$ has four fundamental discontinuities. Hence $f$ and $h$ cannot be conjugate by Lemma~\ref{lem:ber} and $f$ cannot be an old transformation. 
\end{proof}
More generally, every $n$-IET which satisfies the Keane condition and has $n$ fundamental discontinuities is a new transformation by Corollary~4.5 in \cite{Ber18}.

\paragraph{Acknowledgement.} I would like to thank the anonymous referee for his careful reading and valuable comments.

\bibliographystyle{acm}
\bibdata{references}
\bibliography{references}

\begin{thebibliography}{10}

\bibitem{Ber17}
{\sc Bernazzani, D.}
\newblock Most interval exchanges have no roots.
\newblock {\em Journal of Modern Dynamics 11\/} (2017), 249--262.

\bibitem{Ber18}
{\sc Bernazzani, D.}
\newblock {\em Centralizers and Conjugacy Classes in the Group of Interval
  Exchange Transformations}.
\newblock PhD thesis, Rice Univerity, 2018.

\bibitem{Bes34}
{\sc Besicovitch, A.}
\newblock Sets of fractional dimensions (iv): On rational approximation to real
  numbers.
\newblock {\em J. London Math. Soc. 9(2)\/} (1934), 126--131.

\bibitem{Bos16}
{\sc Boshernitzan, M.}
\newblock Subgroup of interval exchanges generated by torsion elements and
  rotations.
\newblock {\em Proceedings of the American Mathematical Society 144\/} (2016),
  2565--2573.

\bibitem{DP10}
{\sc Dick, J., and Pillichshammer, F.}
\newblock {\em Digital Nets and Sequences}.
\newblock Cambridge University Press, Cambridge, 2010.

\bibitem{Dou99}
{\sc Doudekova-Puydebois, M.}
\newblock {\em Contribution a l'\'{e}tude dynamique de translations par
  intervalles}.
\newblock PhD thesis, Universit\'{e} de Provence, Marseille, 1999.

\bibitem{DT97}
{\sc Drmota, M., and Tichy, R.}
\newblock {\em Sequences, Discrepancies and Applications}.
\newblock Lecture Notes in Mathematics 1651, Springer, Berlin, 1997.

\bibitem{Fer97}
{\sc Ferenczi, S.}
\newblock Systems of finite rank.
\newblock {\em Colloq. Math 73(1)\/} (1997), 35--65.

\bibitem{HK02}
{\sc Hasselblatt, B., and Katok, A.}
\newblock {\em Handbook of Dynamical Systems}.
\newblock Elsevier, 2002.

\bibitem{Jar31}
{\sc Jarnik, V.}
\newblock {\"U}ber die simultanen diophantischen {A}pproximationen.
\newblock {\em Math. Z. 33\/} (1931), 505--543.

\bibitem{Khi63}
{\sc Khintchnie, A.}
\newblock {\em Continued Fractions}.
\newblock Noordhoff Ltd., Groningen, 1963.

\bibitem{Lar14}
{\sc Larcher, G.}
\newblock Discrepancy estimates for sequences: new results and open problems.
\newblock In {\em Uniform Distribution and Quasi-Monte Carlo Methods\/} (2014),
  P.~Kritzer, H.~Niederreiter, F.~Pillichshammer, and A.~Winterhof, Eds., De
  Gruyter, pp.~171--189.

\bibitem{Mas82}
{\sc Masur, H.}
\newblock Interval exchange transformations and measured foliations.
\newblock {\em Ann. of Mat. 115 (1)\/} (1982), 169--200.

\bibitem{Nie92}
{\sc Niederreiter, H.}
\newblock {\em Random Number Generation and Quasi-Monte Carlo Methods}.
\newblock Number 63 in CBMS-NSF Series in Applied Mathematics, SIAM,
  Philadelphia, 1992.

\bibitem{Nov12}
{\sc Novak, C.}
\newblock Interval exchanges that do not embed in free groups.
\newblock {\em Groups, Geometry, and Dynamics 6\/} (2012), 755--763.

\bibitem{Sch72}
{\sc Schmidt, W.~M.}
\newblock Irregularities of distribution vii.
\newblock {\em Acta Arith. 21\/} (1972), 45--50.

\bibitem{Vee78}
{\sc Veech, W.}
\newblock Interval exchange maps.
\newblock {\em J. Analyse Math. 33\/} (1978), 222--278.

\bibitem{Via06}
{\sc Viana, M.}
\newblock Ergodic theory of interval exchange maps.
\newblock {\em Rev. Mat. Complut 19 (1)\/} (2006), 7--100.

\bibitem{Vor11}
{\sc Voroberts, Y.}
\newblock Notes on the commutator group of the group of interval exchange
  transformations.
\newblock {\em arXiv:1109.1352\/} (2011).

\bibitem{Wei19}
{\sc Wei\ss{}, C.}
\newblock Interval exchange transformations and low-discrepancy.
\newblock {\em Ann. Mat. Pura Appl. 198 (2)\/} (2019), 399--410.

\bibitem{Yoc06}
{\sc Yoccoz, J.~C.}
\newblock Continued fraction algorithms for interval exchange maps: an
  introduction.
\newblock {\em Frontiers in number theory, physics, and geometry I\/} (2006),
  401--435.

\bibitem{Zor97}
{\sc Zorich, A.}
\newblock Deviation for interval exchange transformations.
\newblock {\em Ergod. Th. \& Dynam. Sys 17\/} (1997), 1477--1499.

\end{thebibliography}

\textsc{Hochschule Ruhr West, Duisburger Str. 100, D-45479 M\"ulheim an der Ruhr}\\
\textit{E-mail address:} \texttt{christian.weiss@hs-ruhrwest.de}

\end{document}